\newcommand{\C}{\mathbb{C}}
\newcommand{\ZZ}{\mathbb{Z}}
\newcommand{\QQ}{\mathbb{Q}}
\newcommand{\PP}{\mathbb{P}}
\newcommand{\Sy}{\mathfrak S}
\newcommand{\MM}{\mathcal M}
\DeclareMathOperator{\aut}{Aut}
\newtheorem{theorem}{Theorem}[section]
\newtheorem{claim}[theorem]{Claim}
\newtheorem{corollary}[theorem]{Corollary}
\newtheorem{proposition}[theorem]{Proposition}
\newtheorem{conjecture}[theorem]{Conjecture}
\newtheorem{nonumbering}{Theorem}
\newtheorem{nonumberingc}{Corollary}
\theoremstyle{definition}
\newtheorem{remark}[theorem]{Remark}
\newtheorem{definition}[theorem]{Definition}
\newtheorem{convention}{Conventions}
\newtheorem{nonumberingt}{Acknowledgements}
\begin{document}
\author[Robert Laterveer]
{Robert Laterveer}

\address{Institut de Recherche Math\'ematique Avanc\'ee,
CNRS -- Universit\'e 
de Strasbourg,\
7 Rue Ren\'e Des\-car\-tes, 67084 Strasbourg CEDEX,
FRANCE.}
\email{robert.laterveer@math.unistra.fr}

\title{Zero-cycles on Cancian--Frapporti surfaces}

\begin{abstract} An old conjecture of Voisin describes how $0$-cycles on a surface $S$ should behave when pulled-back to the self-product 
$S^m$ for $m>p_g(S)$.
We show that Voisin's conjecture is true for a $3$-dimensional family of surfaces of general type with $p_g=q=2$ and $K^2=7$ constructed by
Cancian and Frapporti, and revisited by Pignatelli--Polizzi.
\end{abstract}

\keywords{Algebraic cycles, Chow groups, motives, Voisin conjecture, surfaces of general type, abelian varieties, Prym varieties}
\subjclass[2010]{Primary 14C15, 14C25, 14C30.}

\maketitle

\section{Introduction}

Let $X$ be a smooth projective variety over $\C$, and let $A^i(X)_{\ZZ}:=CH^i(X)_{}$ denote the Chow groups of $X$ (i.e. the groups of codimension $i$ algebraic cycles on $X$ with $\ZZ$-coefficients, modulo rational equivalence \cite{F}). Let $A^i_{hom}(X)_{\ZZ}$ (and $A^i_{AJ}(X)_{\ZZ}$) denote the subgroup of homologically trivial (resp. Abel--Jacobi trivial) cycles.     

The Bloch--Beilinson--Murre conjectures describe an alluring kind of paradise, in which Chow groups are precisely determined by cohomology and the coniveau filtration \cite{J2}, \cite{J4}, \cite{Mur}, \cite{Kim}, \cite{MNP}, \cite{Vo}. The following particular glimpse of this paradise was first formulated by Voisin:

\begin{conjecture}[Voisin 1993 \cite{V9}]\label{conj} Let $S$ be a smooth projective surface. Let $m$ be an integer strictly larger than the geometric genus $p_g(S)$. Then for any $0$-cycles $a_1,\ldots,a_m\in A^2_{AJ}(S)_{\ZZ}$, one has
  \[ \sum_{\sigma\in\Sy_m} \hbox{sgn}(\sigma) a_{\sigma(1)}\times\cdots\times a_{\sigma(m)}=0\ \ \ \hbox{in}\ A^{2m}(S^m)_{\ZZ}\ .\]
  (Here $\Sy_m$ is the symmetric group on $m$ elements, and $ \hbox{sgn}(\sigma)$ is the sign of the permutation $\sigma$.
  The notation $a_1\times\cdots\times a_m$ is shorthand for the $0$-cycle $(p_1)^\ast(a_1)\cdot (p_2)^\ast(a_2)\cdots (p_m)^\ast(a_m)$ on 
  $S^m$, where the $p_j\colon S^m\to S$ are the various projections.)
  \end{conjecture}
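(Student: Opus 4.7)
The plan is to reframe the problem motivically. First, I would invoke Murre's Chow--K\"unneth decomposition for the surface $S$ to write its rational Chow motive as $h(S)=\bigoplus_{i=0}^{4}h^i(S)$, and further split $h^2(S)=h^2_{alg}(S)\oplus t_2(S)$, where $h^2_{alg}$ is the sub-motive spanned by divisor classes and $t_2(S)$ is the transcendental summand, whose Hodge realisation is $H^2_{tr}(S)$ with $h^{2,0}(t_2(S))=p_g(S)$.

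Second, I would reduce the statement to a property of $t_2(S)$ alone. After passing to rational coefficients and using that the cycles are Albanese-trivial, the group $A^2_{AJ}(S)_{\QQ}$ is naturally identified with the Chow group of the motive $t_2(S)$, so each external product $a_1\times\cdots\times a_m$ lives in the Chow group of $t_2(S)^{\otimes m}$. The alternating sum in the conjecture is precisely the image of $a_1\otimes\cdots\otimes a_m$ under the skew-symmetriser $\epsilon_m=\frac{1}{m!}\sum_{\sigma\in\Sy_m}\hbox{sgn}(\sigma)\sigma$. Thus the conjecture becomes the vanishing of the Chow motive $\bigwedge^{m}t_2(S):=\epsilon_m\bigl(t_2(S)^{\otimes m}\bigr)$ for every $m>p_g(S)$.

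Third, the natural tool is Kimura--O'Sullivan finite-dimensionality: an evenly finite-dimensional motive of dimension $d$ satisfies $\bigwedge^{d+1}=0$. The main obstacle — and in my view the crux of the difficulty — is that even granting finite-dimensionality of $h(S)$ (itself conjectural in general, though known for many surfaces), the best dimension bound available for $t_2(S)$ is $\dim H^2_{tr}(S)=2p_g+h^{1,1}_{tr}$, which is strictly weaker than the bound $p_g$ required by the conjecture whenever the transcendental $(1,1)$-part is nonzero. Bridging this gap amounts to saying that Abel--Jacobi trivial $0$-cycles only ``see'' the Hodge number $h^{2,0}$, a refined form of finite-dimensionality which is part of the conjectural content of the Bloch--Beilinson--Murre filtration; no unconditional general argument is known to force this refinement.

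In the absence of such a general refinement, I would resort to geometric input, and this is presumably the only route with current technology: one searches for extra structure on $S$ — a morphism to an abelian variety of dimension $p_g$, a Prym or intermediate Jacobian construction, or an explicit dominant correspondence to a simpler variety — that forces $t_2(S)$ to admit a motivic decomposition matching its Hodge realisation tightly enough to bring the effective bound down from $\dim H^2_{tr}(S)$ to $p_g(S)$. This case-by-case propagation of finite-dimensionality from abelian varieties through explicit correspondences is how every presently known instance of Conjecture~\ref{conj} has been established, and is presumably the mechanism exploited for the Cancian--Frapporti surfaces treated in the remainder of the paper.
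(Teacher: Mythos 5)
The statement you were asked about is Conjecture~\ref{conj} itself: it is an open conjecture, the paper gives no proof of it (nor does anyone), and its only role here is to be established for the particular family of Cancian--Frapporti surfaces in Theorem~\ref{main}. Your diagnosis is therefore essentially the right one, and your strategic outline (pass to $\QQ$-coefficients, identify $A^2_{AJ}(S)$ with the Chow group of the Kahn--Murre--Pedrini transcendental motive $t_2(S)=h^2_{tr}(S)$, view the alternating sum as the image under the skew-symmetriser, and note that Kimura finite-dimensionality alone only kills things beyond $\dim H^2_{tr}(S)$, not beyond $p_g(S)$) matches the skeleton of the argument the paper actually runs in the special case.

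Two corrections are worth making. First, your reformulation ``the conjecture becomes the vanishing of the Chow motive $\bigwedge^m t_2(S)$'' is too strong and in fact false in general: for $m>p_g(S)$ the cohomology $\bigwedge^m H^2_{tr}(S,\QQ)$ is usually nonzero (the $(1,1)$-part of $H^2_{tr}$ contributes), so the motive cannot vanish; what the conjecture asks is the vanishing of the group of zero-cycles $A_0\bigl(\bigwedge^m t_2(S)\bigr)$ (indeed just of the alternating external products), which is precisely the Bloch-type refinement you go on to describe, and is exactly what the paper proves in its case via $A_0(\wedge^2 M_{tr})=0$. Second, the ``geometric input'' in the paper is more specific than a generic propagation of finite-dimensionality through correspondences: the degree-$3$ Albanese map splits $h^2_{tr}(S)=h^2_{tr}(A)\oplus M_{tr}$, the abelian-variety part is handled by Voisin's identity $a_1\times a_2=(-1)^g a_2\times a_1$ for $A^g_{(g)}(A)$, and the remaining piece $M_{tr}$ is shown to be motivated by an abelian \emph{surface} $B$ with $h^{2,0}(M_{tr})=1$, so that Vial's theorem applies; that theorem rests on a strong form of the generalized Hodge conjecture for self-products of abelian surfaces (with finite-dimensionality entering only through the nilpotence theorem to lift the homological factorisation to rational equivalence). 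So the decisive ingredient is this strong GHC input for abelian surfaces, not finite-dimensionality by itself; this is also why the paper stresses that the method does not extend to the other $p_g=q=2$ families where the relevant abelian variety has dimension larger than $2$.
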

  
For surfaces of geometric genus $0$, conjecture \ref{conj} reduces to Bloch's conjecture \cite{B}. As for geometric genus $1$, Voisin's conjecture is still open for a general K3 surface;
examples of surfaces of geometric genus $1$ verifying the conjecture are given in \cite{V9}, \cite{16.5}, \cite{19}, \cite{21}. Examples
of surfaces with geometric genus strictly larger than $1$ verifying the conjecture are given in \cite{32}.
One can also formulate versions of conjecture \ref{conj} for higher-dimensional varieties; this is studied in \cite{V9}, \cite{17}, \cite{24.4}, \cite{24.5}, \cite{BLP}, \cite{LV}, \cite{Ch}, \cite{Bur}.

The modest goal of this note is to add to the stock of surfaces verifying conjecture \ref{conj}, by considering {\em Cancian--Frapporti surfaces\/}.
These are minimal surfaces $S$ of general type with $p_g(S)=q(S)=2$ and $K_S^2=7$ constructed as {\em semi-isogenous mixed surfaces\/} in \cite{CF} and revisited in \cite{PP}.\footnote{As explained in loc. cit., only two families of minimal surfaces of general type with invariants $p_g=q=2$ and $K^2=7$ are known: the $3$-dimensional family of Cancian--Frapporti, and a $2$-dimensional family (distinct from the first family) constructed as bidouble covers by Rito \cite{Rit}. For Rito's surfaces, proving conjecture \ref{conj} seems difficult as they are not known to have finite-dimensional motive.} The main result of this note is:

\begin{nonumbering}[=theorem \ref{main}] Let $S$ be a Cancian--Frapporti surface.
Then conjecture \ref{conj} is true for $S$.
\end{nonumbering}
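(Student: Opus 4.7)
The plan is to exploit the explicit construction of Cancian--Frapporti surfaces as semi-isogenous mixed surfaces, $S = (C\times C)/G$, from \cite{CF} (in the refined presentation of \cite{PP}) to reduce the statement to a question about abelian varieties, and then apply Kimura--O'Sullivan finite-dimensionality. First, the quotient presentation implies that the Chow motive $h(S)$ is a direct summand of $h(C\times C)$, cut out by the natural projector $\frac{1}{|G|}\sum_{g\in G} g$ (together with the usual Chow--K\"unneth projectors). By K\"unneth, the transcendental motive $t_2(S)$ then embeds as a direct summand of $h^1(C)\otimes h^1(C)$, which in turn is a direct summand of the motive of the abelian variety $J(C)\times J(C)$.

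The crucial next step is to pin down $t_2(S)$ more precisely. Using the Pignatelli--Polizzi description --- which exhibits $\alb(S)$ as an abelian surface and relates $S$ to a Prym subvariety of $J(C)\times J(C)$ --- one expects to produce, up to isogeny, a motivic identification of the form $t_2(S)\cong t_2(A_1)\oplus t_2(A_2)$, where $A_1,A_2$ are abelian surfaces. The two summands should correspond to a decomposition of $(H^{1,0}(C)\otimes H^{1,0}(C))^G$ (whose complex dimension equals $p_g(S)=2$) into two one-dimensional eigenspaces under a suitable involution coming from the semi-isogenous construction.

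Once this motivic identification is in place, the conclusion becomes formal. For each abelian surface $A_i$, Voisin's conjecture with $m=2$ is known (via Beauville's decomposition of $A^2(A_i)$ together with finite-dimensionality), which motivically reads $\bigwedge^2 t_2(A_i)=0$; similarly $\bigwedge^3 t_2(A_i)=0$ by finite-dimensionality and $h^{2,0}(A_i)=1$. The standard combinatorial expansion
\[
\bigwedge^3\bigl(t_2(A_1)\oplus t_2(A_2)\bigr) = \bigwedge^3 t_2(A_1)\,\oplus\,\bigl(\bigwedge^2 t_2(A_1)\otimes t_2(A_2)\bigr)\,\oplus\,\bigl(t_2(A_1)\otimes\bigwedge^2 t_2(A_2)\bigr)\,\oplus\,\bigwedge^3 t_2(A_2)
\]
then forces $\bigwedge^3 t_2(S)=0$, which is precisely Voisin's conjecture for $S$ with $m=3$.

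The main obstacle will lie in the second step: rigorously lifting the cohomological eigenspace decomposition to a splitting of Chow motives. This requires a careful analysis of the $G$-action on $J(C)$, the extraction of distinguished isogeny factors (the two abelian surfaces $A_i$, possibly arising as Prym-type varieties in the spirit of the paper's keywords), and the promotion of the splitting from Hodge structures to Chow motives --- a step that typically invokes Kimura--O'Sullivan finite-dimensionality for motives of abelian varieties together with the refined Chow--K\"unneth decomposition of Deninger--Murre. A contingency, should the expected splitting fail to materialize cleanly, is to work directly with $t_2(S)$ as a summand of the motive of an abelian variety of higher dimension and perform the three-fold antisymmetrization there; the vanishing should still follow once the cohomological realization of $\bigwedge^3 t_2(S)$ is shown to vanish (as it must, since $\bigwedge^3 H^{2,0}_{\mathrm{tr}}(S)=0$ for dimension reasons) and finite-dimensionality is invoked.
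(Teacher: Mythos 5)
Your outline correctly identifies the shape of the argument that the paper actually uses: split $h^2_{tr}(S)$ via the Albanese/Prym geometry into a piece coming from an abelian surface and a complementary piece, and then kill $\bigwedge^3$ summand by summand. Concretely, the paper obtains $h^2_{tr}(S)\cong h^2_{tr}(A)\oplus M_{tr}$ where $A=\alb(S)$ is an abelian surface, shows $M_{tr}$ is {\em motivated by\/} an abelian surface $B$ (the complementary Prym factor of $\mathrm{Jac}(C_4)$), computes $\dim H^{2,0}(M_{tr})=1$, and kills $\bigwedge^2$ of each factor.

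However, there are two genuine gaps in your proposal as written. First, you expect a clean isomorphism $t_2(S)\cong t_2(A_1)\oplus t_2(A_2)$ with abelian surfaces $A_1,A_2$. What the construction actually gives is weaker: one summand is $h^2_{tr}(A)$ but the complementary piece $M_{tr}$ is only a \emph{direct summand} of a motive built from an abelian surface $B$, not $h^2_{tr}(B)$ itself (its Hodge numbers need not match those of any abelian surface). For the $A$-summand, Voisin's antisymmetry result $a_1\times a_2=(-1)^g a_2\times a_1$ on $A^g_{(g)}$ handles $\bigwedge^2$ directly. For $M_{tr}$, one cannot simply cite Voisin's conjecture for abelian surfaces; one needs a Bloch-conjecture-type statement for motives merely \emph{motivated by} an abelian surface — this is Vial's theorem \ref{ch}, which rests on the strong generalized Hodge conjecture for self-products of abelian surfaces (Abdulali). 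This is the key input the paper singles out, and it is absent from your plan.

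Second, and more seriously, your contingency argument is wrong. You claim that the cohomological realization of $\bigwedge^3 t_2(S)$ vanishes because $\bigwedge^3 H^{2,0}_{tr}(S)=0$ (as $p_g=2<3$). But $\bigwedge^3 H^2_{tr}(S,\QQ)$ is a nonzero Hodge structure: only its extreme Hodge piece of type $(6,0)$ vanishes, while types such as $(5,1)$, $(4,2)$, $(3,3)$ survive. Finite-dimensionality only lets you conclude that a motive with vanishing \emph{total} cohomology is zero; it does not let you pass from vanishing of $H^{p,0}$ to vanishing of Chow groups. That passage is exactly a generalized Bloch/Hodge conjecture statement, known here only because $M_{tr}$ is motivated by an abelian variety of dimension $\le 2$ (theorem \ref{ch}), and this is why the paper insists on controlling the dimension of the auxiliary abelian variety — the same argument would not go through if $B$ had higher dimension (cf.\ remark \ref{notsure}).
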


This is proven by exploiting the facts that Cancian--Frapporti surfaces have (a) finite-dimensional motive (in the sense of \cite{Kim}) and (b) surjective Albanese morphism \cite{PP}. 
A key ingredient of the argument is a strong form of the generalized Hodge conjecture for self-products of abelian surfaces \cite{Ab}, \cite{Ch}.
Because of the use of this key ingredient, I am not sure whether the argument can be adapted to other surfaces with $p_g=q=2$ verifying (a) and (b) (cf. remark \ref{notsure}).

 As a corollary, certain instances of the generalized Hodge conjecture are verified:
 
 \begin{nonumberingc}[=corollary \ref{ghc}] Let $S$ be a Cancian--Frapporti surface, and let $m>2$. Then the sub-Hodge structure
   \[ \wedge^m H^2(S,\QQ)\ \subset\ H^{2m}(S^m,\QQ) \]
   is supported on a divisor.
   \end{nonumberingc}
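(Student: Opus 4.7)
The plan is to deduce the corollary from the main theorem (Voisin's conjecture for $S$) together with finite-dimensionality of the motive of $S$, via a Bloch--Srinivas type argument. First I would decompose
$H^2(S,\QQ) = NS(S)_{\QQ} \oplus H^2_{tr}(S)$ into the N\'eron--Severi and transcendental parts, giving
\[ \wedge^m H^2(S,\QQ) = \bigoplus_{i+j=m} \wedge^i NS(S)_{\QQ} \otimes \wedge^j H^2_{tr}(S). \]
Every summand with $i\geq 1$ contains at least one factor pulled back from a divisor of $S$ via one of the projections $S^m \to S$, and is therefore supported on a divisor of $S^m$ (a union of preimages of divisors). The problem thus reduces to showing that $\wedge^m H^2_{tr}(S) \subset H^{2m}(S^m,\QQ)$ is supported on a divisor.

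By finite-dimensionality, there is a transcendental submotive $t_2(S) \subset h^2(S)$ whose cohomological realization is $H^2_{tr}(S)$, and antisymmetrizing the $m$-th tensor power yields an idempotent correspondence $\pi \in CH^{2m}(S^m\times S^m)_{\QQ}$ whose cohomological action on $H^\ast(S^m,\QQ)$ is the projection onto $\wedge^m H^2_{tr}(S) \subset H^{2m}(S^m,\QQ)$. The main theorem says that for $a_1,\ldots,a_m \in A^2_{AJ}(S)_{\QQ}$ one has $\sum_{\sigma \in \Sy_m} \hbox{sgn}(\sigma)\, a_{\sigma(1)}\times\cdots\times a_{\sigma(m)} = 0$ in $A^{2m}(S^m)_{\QQ}$; combined with finite-dimensionality of $t_2(S)^{\otimes m}$, this should promote to the global vanishing $\pi_\ast A^{2m}(S^m)_{\QQ} = 0$. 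A standard Bloch--Srinivas spreading argument then gives that any correspondence on $S^m \times S^m$ killing $A^{2m}(S^m)_{\QQ}$ is cohomologically equivalent to a cycle supported on $S^m \times D$ for some divisor $D \subset S^m$; hence $\ima \pi_\ast \subset H^\ast(S^m,\QQ)$, and in particular the sub-Hodge structure $\wedge^m H^2_{tr}(S)$, is supported on $D$.

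I expect the main technical hurdle to lie in the passage from Voisin's conjecture to the global vanishing $\pi_\ast A^{2m}(S^m)_{\QQ} = 0$: this requires knowing that the ``most transcendental'' summand of $CH_0(S^m)_{\QQ}$ (i.e.~the image of the projector $(\pi_{t_2})^{\otimes m}$) is generated by external products of Abel--Jacobi trivial $0$-cycles on $S$. In principle this is a standard consequence of finite-dimensionality of $t_2(S)$ applied to its tensor powers, but it deserves an explicit argument via a refined Chow--K\"unneth decomposition of $CH^{2m}(S^m)_{\QQ}$ respecting the product structure.
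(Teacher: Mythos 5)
Your proposal takes the same route as the paper, which simply invokes Voisin's own observation \cite[Corollary 3.5.1]{V9} that conjecture \ref{conj} implies this support statement via Bloch--Srinivas; you have correctly unpacked what that citation means.

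The ``technical hurdle'' you flag at the end is, however, not really there. The group $A^{2m}(S^m)_\QQ$ is the Chow group of $0$-cycles on a variety over an algebraically closed field, so it is generated by closed ($\C$-rational) points, and every closed point of $S^m$ is an exterior product $p_1\times\cdots\times p_m$ of closed points of $S$. Exterior products of correspondences act factor-wise, so $(\pi_{tr}^{\otimes m})_\ast(p_1\times\cdots\times p_m)=(\pi_{tr,\ast}p_1)\times\cdots\times(\pi_{tr,\ast}p_m)$, and each $\pi_{tr,\ast}p_i$ lies in $A^\ast(h^2_{tr}(S))=A^2_{AJ}(S)$ by theorem \ref{tr}. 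The antisymmetrizer $e_m\in\QQ[\Sy_m]$ satisfies $e_m=e_m e_3$, where $e_3$ antisymmetrizes over the first three tensor factors; since theorem \ref{main} kills the $e_3$-image of any triple of AJ-trivial $0$-cycles, it kills the $e_m$-image of any $m$-tuple for $m\ge 3$. Hence $\pi_\ast A^{2m}(S^m)_\QQ=0$ outright, with no need for a refined Chow--K\"unneth decomposition or any assertion about generating the ``most transcendental'' summand separately. (For the Bloch--Srinivas step you should really run this over a universal domain $L\supset\C$, but the same argument applies: $A^{2m}(S^m_L)_\QQ$ is generated by $L$-points, these are still products, and the vanishing of theorem \ref{main}, being a statement about correspondences proved by nilpotence, is stable under extension of the base field.) One small point you glossed over and should include explicitly is this reduction from arbitrary $m>2$ to $m=3$ via $e_m=e_m e_3$, since theorem \ref{main} is stated only for triples.
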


   \vskip0.6cm

\begin{convention} In this note, the word {\sl variety\/} will refer to a reduced irreducible scheme of finite type over $\C$. A {\sl subvariety\/} is a (possibly reducible) reduced subscheme which is equidimensional. 

{\bf Unless indicated otherwise, all Chow groups will be with rational coefficients}: we will denote by $A_j(X)$ the Chow group of $j$-dimensional cycles on $X$ with $\QQ$-coefficients (and by $A_j(X)_{\ZZ}$ the Chow groups with $\ZZ$-coefficients); for $X$ smooth of dimension $n$ the notations $A_j(X)$ and $A^{n-j}(X)$ are used interchangeably. 

The notations $A^j_{hom}(X)$, $A^j_{AJ}(X)$ will be used to indicate the subgroups of homologically trivial, resp. Abel--Jacobi trivial cycles.
The contravariant category of Chow motives (i.e., pure motives with respect to rational equivalence as in \cite{Sc}, \cite{MNP}) will be denoted $\MM_{\rm rat}$.

\end{convention}

 \section{Cancian--Frapporti surfaces}
 
 \begin{theorem}[Cancian--Frapporti \cite{CF}, Pignatelli--Polizzi \cite{PP}]\label{pp} There exist minimal surfaces $S$ of general type with $p_g(S)=q(S)=2$ and $K_S^2=7$, and surjective Albanese map (of degree $3$).
 These surfaces fill out a dense open subset of a $3$-dimensional component of the Gieseker moduli space of general type minimal surfaces with these invariants.
  \end{theorem}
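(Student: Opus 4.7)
The statement is an amalgam of results proved in \cite{CF} and \cite{PP}, so my plan is to recall their construction and indicate where each assertion is established. The basic recipe is that of a \emph{semi-isogenous mixed surface}: one fixes a smooth projective curve $C$ together with a finite group $G$ acting on the product $C \times C$ in such a way that an index-two subgroup $G^0 \subset G$ acts diagonally via two faithful homomorphisms $\varphi_1, \varphi_2 \colon G^0 \to \aut(C)$, while the non-trivial coset of $G^0$ in $G$ permutes the two factors. One then forms $X := (C \times C)/G$ (which has at worst cyclic quotient singularities) and passes to its minimal resolution $S \to X$, and claims that $S$ realises the desired invariants.

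First I would specify the group-theoretic input found in \cite{CF}: the precise finite group $G$, its diagonal subgroup $G^0$, the genus $g$ of $C$, and the ramification signatures of the two quotient maps $C \to C/\varphi_i(G^0)$. Second, I would compute $p_g(S)$, $q(S)$, and $K_S^2$ using the general formulas available for semi-isogenous mixed surfaces (which express the invariants in terms of $g$, $|G|$, and the ramification data), and check that the chosen input yields exactly $p_g = q = 2$ and $K_S^2 = 7$, and that $S$ is minimal of general type. Third, for the Albanese morphism, since $q(S) = 2 = \dim S$ it lands in an abelian surface; following \cite{PP} one verifies that it is surjective of degree $3$, either by a direct cohomological calculation or by exploiting the explicit presentation of $S$ as a quotient of $C\times C$. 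Finally, for the moduli statement, one parametrises the admissible $(C, G)$-data and verifies that the resulting family is $3$-dimensional and sweeps out an open subset of an irreducible component of the Gieseker moduli space, distinct from Rito's family \cite{Rit}.

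The hardest step is finding the correct combinatorial input $(C, G)$ realising these specific invariants, which is precisely the content of \cite{CF}. Once the construction is in place, the numerical computations reduce to mechanical applications of the standard formulas for semi-isogenous mixed surfaces, whereas the degree of the Albanese map and the moduli dimension count require the more delicate analysis carried out in \cite{PP}.
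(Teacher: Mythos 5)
Your proposal correctly identifies the statement as an amalgam of results from \cite{CF} and \cite{PP} and correctly recalls the general framework of mixed quotients of $C\times C$, which is indeed the approach the paper sketches (with the specific input: a genus-$4$ curve $C_4\subset\PP^3$ admitting a free order-$3$ automorphism $\xi$ with quotient $C_2$ of genus $2$, and $G=\langle\xi_{xy},\sigma\rangle$ acting on $C_4\times C_4$). However, there is one inaccuracy worth correcting: you say one forms $X:=(C\times C)/G$, ``which has at worst cyclic quotient singularities,'' and then passes to a minimal resolution $S\to X$. For a \emph{semi-isogenous} mixed surface in the sense of \cite[Definition 2.1]{CF}, the defining requirement is that the diagonal subgroup $G^0$ acts freely on $C\times C$, so the quotient $S=(C\times C)/G$ is already smooth (cf.\ \cite[Corollary 1.11]{CF}), and no resolution step occurs. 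Your description fits the more general class of mixed or product-quotient surfaces, not the semi-isogenous case relevant here; the distinction matters because the invariant formulas and minimality arguments in \cite{CF}, \cite{PP} are stated for the smooth quotient directly. Apart from that, your outline matches the paper's proof, which likewise defers the verification of the invariants, the degree-$3$ surjective Albanese map, and the $3$-dimensional moduli component to \cite[Propositions 1.5, 1.8 and Theorem 2.7]{PP}.
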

  
  \begin{proof} We present a condensed outline of the construction, following \cite{PP}.
  
  Let $C_4\subset\PP^3$ be a genus $4$ curve defined as a smooth complete intersection
  \[ r(x_0,x_1)+x_2 x_3= s(x_0,x_1)+x_2^3 + x_3^3=0\ ,\]
  where $r(x_0,x_1), s  (x_0,x_1)$ are homogeneous polynomials of degree $2$ resp. $3$. The curve $C_4$ admits a free action of an order $3$ automorphism $\xi$ defined as
   \[ \xi [ x_0, x_1,x_2,x_3]\:= [ x_0, x_1,\nu\, x_2,\nu^2\, x_3]  \ ,\]
   where $\nu$ is a primitive third root of unity. The quotient $C_2:= C_4/\langle\xi\rangle$ is a smooth genus $2$ curve.
   
   The product $C_4\times C_4$ admits an involution $\sigma$ (switching the two factors) and an order $3$ diagonal automorphism $\xi_{xy}$
   (acting as $\xi$ on both factors). The surface $S$ is now defined as a quotient
   \[ S:= (C_4\times C_4)/G := (C_4\times C_4)/\langle \xi_{xy},\sigma\rangle\ .\]
   (The surface $S$ is smooth, because it is a {\em semi-isogenous mixed surface\/} in the sense of \cite[Definition 2.1]{CF}, cf. \cite[Corollary 1.11]{CF}.)
   
   The group $G$ is a non-normal, abelian subgroup of the group
     \[ H:=\langle \xi_x, \xi_y,\sigma\rangle\ \ \subset\ \aut(C_4\times C_4)\ ,\]
     where $\xi_x,\xi_y$ act as $\xi$ on the first, resp. second, factor.
     As shown in \cite[(4)]{PP}, there is a commutative diagram
     \[ \begin{array}[c]{ccc}
          C_4\times C_4 &&\\
          &&\\
          \downarrow&&\\
          &&\\
          (C_4\times C_4)/\langle \xi_{xy}\rangle &\to&  (C_4\times C_4)/\langle \xi_{x},\xi_y\rangle    \ \cong    C_2\times C_2\\
          &&\\
          \downarrow&&\downarrow\\
          &&\\
          S:=  (C_4\times C_4)/\langle \xi_{xy},\sigma\rangle&\xrightarrow{\beta}&   Y:=(C_4\times C_4)/H\cong \hbox{Sym}^2 (C_2)\\
          &&\\
           &  \searrow{\scriptstyle \alpha} &\ \ \ \downarrow{\scriptstyle \pi}\\ 
           &&\\
           & &  A=\hbox{Alb}(S)\cong\hbox{Jac}(C_2)\ .\\
           \end{array}\]
     Here, the unnamed horizontal arrows are the natural quotient morphisms, the morphism $\pi$ is the contraction of the unique rational curve contained in $Y$, and the morphism $\alpha$ is the Albanese map. The fact that the morphism $\alpha$ making the diagram commute is the Albanese map (which is thus surjective) is contained in \cite[Proposition 1.8]{PP}.

   The invariants of $S$ and the minimality are justified in \cite[Proposition 1.5]{PP}.    
   Finally, the statement about the moduli space is \cite[Theorem 2.7]{PP}.
     \end{proof}
  
  \begin{definition} We will call surfaces as in theorem \ref{pp} {\em Cancian--Frapporti surfaces\/}.
   \end{definition}

 \section{Transcendental part of the motive of a surface}
 
 \begin{theorem}[Kahn--Murre--Pedrini \cite{KMP}]\label{tr} Let $S$ be a smooth projective surface. There exists a decomposition
  \[ h(S)=  h^0(S)\oplus h^1(S)\oplus h^2_{tr}(S)\oplus h^2_{alg}(S)\oplus h^3(S)\oplus h^4(S)   \ \in \MM_{\rm rat}\ ,\]
  such that
  \[  H^\ast(h_{tr}^2(S),\QQ)= H^2_{tr}(S,\QQ)\ ,\ \ H^\ast(h^2_{alg}(S),\QQ)=NS(S)_{\QQ}\ \]
  (here $H^2_{tr}(S)$ is defined as the orthogonal complement of the N\'eron--severi group $NS(S)_{\QQ}$ in $H^2(S,\QQ)$),
  and
   \[ A^\ast(h_{tr}^2(S))_{\QQ}=A^2_{AJ}(S)_{}\ .\]
   (The motive $h_{tr}^2(S)$ is called the {\em transcendental part of the motive\/}.)
   \end{theorem}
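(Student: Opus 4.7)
The plan is to build the decomposition in two stages: first invoke Murre's Chow--K\"unneth decomposition for surfaces \cite{Mur}, then refine the weight-two piece by splitting off the algebraic part of $H^2$.

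For the first stage, fix a point $e\in S$ and take $\pi^0:=[S\times\{e\}]$, $\pi^4:=[\{e\}\times S]$. Choose a smooth very ample curve $C\subset S$ whose Jacobian surjects onto $\alb(S)$; from the Poincar\'e class on $\pic^0(C)\times C$ one builds mutually orthogonal idempotents $\pi^1,\pi^3\in A^2(S\times S)$ whose cohomological realizations are $H^1(S,\QQ)$ and $H^3(S,\QQ)$. Setting $\pi^2:=[\Delta_S]-\pi^0-\pi^1-\pi^3-\pi^4$ yields an idempotent orthogonal to the rest, with $H^\ast(\pi^2)=H^2(S,\QQ)$.

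For the refinement, let $D_1,\ldots,D_\rho$ be divisors on $S$ whose classes form a $\QQ$-basis of $NS(S)_\QQ$, and let $M:=([D_i]\cdot[D_j])$ be the intersection matrix, invertible by the Hodge index theorem. Define
\[ \pi^2_{alg}:=\sum_{i,j}(M^{-1})_{ij}\,[D_i\times D_j]\ \in\ A^2(S\times S),\qquad \pi^2_{tr}:=\pi^2-\pi^2_{alg}. \]
A direct computation on the composition of correspondences verifies that $\pi^2_{alg}$ is an idempotent realizing exactly $NS(S)_\QQ\subset H^2(S,\QQ)$, and that it is orthogonal to each of $\pi^0,\pi^1,\pi^3,\pi^4$ (for instance $\pi^2_{alg}\circ\pi^0$ vanishes because $[e]\cdot[D_i]\in A^3(S)=0$). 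Hence $\pi^2_{tr}$ is an idempotent orthogonal to all the rest, whose realization is the orthogonal complement $H^2_{tr}(S,\QQ)$.

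It remains to identify $A^\ast(h^2_{tr}(S))$ with $A^2_{AJ}(S)$. For a $0$-cycle $x\in A^2(S)$, the product $[D_i]\cdot x\in A^3(S)=0$ forces $(\pi^2_{alg})_*x=0$; thus $(\pi^2_{tr})_*$ agrees with $(\pi^2)_*$ on $A^2(S)$. Since $(\pi^0)_*x=(\deg x)[e]$ and $(\pi^3)_*x=(\pi^4)_*x=0$, everything reduces to showing that $(\pi^1)_*$ restricted to $A^2_{hom}(S)$ realizes the Albanese map, for then $(\pi^2)_*A^2(S)=\ker(A^2_{hom}(S)\to\alb(S))=A^2_{AJ}(S)$. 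This last step is the main obstacle: one must verify that the Poincar\'e-correspondence on $C\times\pic^0(C)$, lifted to $S\times S$, really induces the Albanese map on zero-cycles, which is the content of Murre's calculation for surfaces, refined by \cite{KMP}. Vanishing of $A^j(h^2_{tr}(S))$ for $j\ne 2$ follows from orthogonality with $\pi^0,\pi^4$ together with the purity of the cohomological realization.
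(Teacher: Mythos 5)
The paper does not prove this theorem; it is stated as a black box and attributed to Kahn--Murre--Pedrini \cite{KMP}. Your sketch reproduces the strategy of that paper (Murre's Chow--K\"unneth decomposition for a surface, refined by splitting off the algebraic part of $h^2$ via an intersection-matrix projector), so modulo the reference it is ``the same proof'' in spirit. Two remarks.

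First, your verification of orthogonality is incomplete in a way that matters. The case you spell out ($\pi^2_{alg}\circ\pi^0 = 0$ because a point meets a divisor in $A^3(S)=0$, and similarly for $\pi^4$ by transposition) is the easy one. The delicate cases are $\pi^1$ and $\pi^3$. Computing, one finds
\[ \pi^1\circ\pi^2_{alg}=\sum_{i,j}(M^{-1})_{ij}\,D_i\times(\pi^1)_\ast[D_j]\ \in\ A^2(S\times S), \]
and $(\pi^1)_\ast[D_j]$ lands in $\Pic^0(S)_\QQ$ but has no reason to vanish if the $D_j$ are chosen arbitrarily in their N\'eron--Severi classes. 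The fix (which is what \cite{KMP} effectively does) is to \emph{normalize}: replace each $[D_j]$ by $(\pi^2)_\ast[D_j]\in A^1(S)$. Since $(\pi^2)_\ast$ changes $[D_j]$ only by a homologically trivial divisor class, the intersection matrix $M$ is unchanged (so idempotency of $\pi^2_{alg}$ survives), and by construction $(\pi^1)_\ast$ and $(\pi^3)_\ast$ now kill the normalized $[D_j]$, giving the required orthogonality. Without this normalization step your $\pi^2_{tr}$ is not an honest idempotent orthogonal to $\pi^1,\pi^3$, and the ensuing identification of Chow groups breaks down.

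Second, you correctly flag that the identification $(\pi^2)_\ast A^2(S)=A^2_{AJ}(S)$ rests on Murre's computation that $\pi^1$ realizes the Albanese kernel on $0$-cycles; this is fine to cite, but note it is prior to the KMP refinement (Murre 1990), with \cite{KMP} supplying the further splitting of $\pi^2$ and the resulting isomorphism $A^\ast(h^2_{tr}(S))\cong A^2_{AJ}(S)$. With the normalization issue addressed, your outline is a faithful reconstruction of the cited proof.
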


 \section{A result of Vial's} This section contains a ``Bloch conjecture'' type of statement. As already shown in \cite{Ch}, this statement is very useful in dealing with Voisin's conjecture on $0$-cycles.
 
 \begin{definition} Let $M\in\MM_{\rm rat}$ and let $X$ be a smooth projective variety. We say that $M$ is {\em motivated by $X$\/} if $M$ is isomorphic to a direct summand of a sum of tensor powers of motives of the form $h(X)(j)$, $j\in\ZZ$.
 \end{definition}

 \begin{theorem}[Vial \cite{Ch}]\label{ch} Let $M\in\MM_{\rm rat}$ be motivated by an abelian variety of dimension $\le 2$.  Assume that 
   \[ H^{i,j}(M)=0\ \ \ \hbox{for\ all\ }j<n\ .\]
   Then also
   \[ A_i(M)=0\ \ \ \hbox{for\ all\ }i<n\ .\]
    \end{theorem}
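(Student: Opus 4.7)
The plan is to combine two inputs: Kimura finite-dimensionality, available since $M$ is motivated by an abelian variety \cite{Kim}, and the generalized Hodge conjecture for self-products of abelian varieties of dimension at most $2$, which is Abdulali's theorem \cite{Ab}. The strategy proceeds in three moves: translate the Hodge-theoretic hypothesis into a coniveau statement, use the generalized Hodge conjecture to factor the realization of $M$ through the cohomology of a subvariety of codimension at least $n$, and finally use finite-dimensionality to lift this factorization from the category of rational Hodge structures to $\MM_{\rm rat}$.

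The first move is essentially a reformulation: the hypothesis $H^{i,j}(M)=0$ for all $j<n$ says that the Hodge filtration on the realization of $M$ satisfies $F^nH^\ast(M)=H^\ast(M)$, i.e., the Hodge coniveau is at least $n$. Since $M$ is a direct summand of some $h(A^N)(\ell)$ with $A$ an abelian variety of dimension at most $2$, the realization of $M$ sits inside $H^\ast(A^N)(\ell)$ as a sub-Hodge structure of Hodge coniveau $\ge n$. For the second move, Abdulali's theorem guarantees that such a sub-Hodge structure is supported on a closed subvariety of $A^N$ of codimension $\ge n$. Taking a resolution, I obtain a smooth projective variety $Z$ with $\dim Z\le \dim A^N -n$ and a correspondence $\Gamma\in A^\ast(Z\times A^N)$ such that the induced map $h(Z)(-n)\to M$ is surjective on realizations.

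The third and most delicate move lifts this cohomological surjection to a split surjection in $\MM_{\rm rat}$. Because $Z$ is birational to a subvariety of $A^N$, its Chow motive remains finite-dimensional (it lies in the thick tensor subcategory generated by $h(A)$). Kimura's cancellation theorem then implies that the kernel of the map $h(Z)(-n)\to M$ in $\MM_{\rm rat}$ has zero Hodge realization and is therefore a phantom; phantoms among finite-dimensional motives vanish, so the surjection splits in $\MM_{\rm rat}$. The desired vanishing $A_i(M)=0$ for $i<n$ follows at once, since $A_i\bigl(h(Z)(-n)\bigr)\cong A_{i-n}(Z)=0$ when $i<n$ (because $Z$ carries no cycles of negative dimension).

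The principal obstacle is precisely this lifting step: Abdulali's conclusion is a priori only Hodge-theoretic, and converting it into a motivic splitting requires both that the intermediate variety $Z$ fall within the class of Kimura finite-dimensional motives and that the cancellation argument apply cleanly to the kernel. The restriction $\dim A\le 2$ is exactly what ensures that the strong form of the generalized Hodge conjecture (Abdulali) and the finite-dimensionality of all motives in sight are simultaneously available; this is also the technical reason Vial's theorem cannot be pushed beyond abelian surfaces using present-day methods.
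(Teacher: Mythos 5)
Your proposal correctly identifies the two essential inputs (the generalized Hodge conjecture for self-products of abelian surfaces, and Kimura finite-dimensionality combined with the nilpotence theorem), and the three-step skeleton matches the paper's argument. However, there is a genuine gap in the middle step. You invoke only the \emph{weak} form of the GHC -- support on a closed subvariety $W\subset A^N$ of codimension $\ge n$ -- and pass to a resolution $Z$ of $W$. But the result that is actually available and that the paper uses (Vial \cite[Corollary 3.13]{Ch}, after Abdulali) is the \emph{strong} form: the coniveau piece is $\Gamma_\ast H^{i-2r}(A')$ where $A'$ is a disjoint union of abelian varieties and $\Gamma$ is a correspondence. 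Your assertion that $h(Z)$ ``lies in the thick tensor subcategory generated by $h(A)$'' because $Z$ is birational to a subvariety of $A^N$ is false: the motive of an arbitrary subvariety of an abelian variety (or of a resolution of one) is not a direct summand of tensor powers of $h(A)$, and is not known to be finite-dimensional, nor is $Z$ known to satisfy the Lefschetz standard conjecture (which you would need to turn a cohomological surjection into a split surjection by correspondences). The strong form of the GHC sidesteps all of this by handing you abelian varieties directly.

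The lifting step is also phrased in a way that would not quite close. The clean argument, and the one the paper runs, is: write $M=(X,p,m)$; the strong GHC plus the standard conjectures for abelian varieties give correspondences $\Psi,\Xi$ with $p=p\circ\Xi\circ\Psi\circ p$ in $\MM_{\rm hom}$; then $p-p\circ\Xi\circ\Psi\circ p$ is a homologically trivial endomorphism of the finite-dimensional motive $M$, hence nilpotent, hence $p$ factors through $h(A')(n-m)$ already in $\MM_{\rm rat}$. Crucially this only uses finite-dimensionality of $M$ itself, not of the intermediate object, so the (unjustified) finite-dimensionality of $h(Z)$ is not actually needed once the strong GHC replaces the weak one. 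Your final observation that cycles of negative dimension vanish is correct and is the right way to conclude.
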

    
    \begin{proof} This is not stated verbatim in \cite{Ch}, but the argument is the same as that of \cite[Theorem 4.7]{Ch}. In a nutshell, the point is that (as proven in \cite[Corollary 3.13]{Ch})
    $M$ satisfies a strong form of the generalized Hodge conjecture, i.e. there is equality
      \[   N^r_H H^i(M) = \Gamma_\ast H^{i-2r}(A)\ ,\]
      where $A$ is a disjoint union of abelian varieties and $\Gamma$ is a correspondence from $A$ to $M$. (Here, $N^\ast_H$ denotes the {\em Hodge coniveau filtration\/} \cite[Definition 1.4]{Ch}.)
      
      Writing $M=(X,p,m)\in\MM_{\rm rat}$, the cohomological assumption thus translates into the fact that the cohomology class of $p$ factors as
      \[   h(X)\ \xrightarrow{\Psi}\ h(A)(n-m)\ \xrightarrow{\Xi}\ h(X)\ ,\]
      where $A$ is a disjoint union of abelian varieties, and $\Psi$ and $\Xi$ are correspondences in $A^\ast(X\times A)$ resp. in $A^\ast(A\times X)$. Since $M$ is Kimura finite-dimensional, one can apply the nilpotence theorem to $p-p\circ\Xi\circ\Psi\circ p$; the outcome is that the rational equivalence class of $p$ factors as
           \[   h(X)\ \xrightarrow{\Psi^\prime}\ h(A)(n-m)\ \xrightarrow{\Xi^\prime}\ h(X)\ .\]
           Taking Chow groups, this proves the theorem.
       \end{proof}

 \section{Main result}
 
 \begin{theorem}\label{main} Let $S$ be a Cancian--Frapporti surface. For any $a,b,c\in A^2_{AJ}(S)_{\ZZ}$, there is equality
   \[ a\times b\times c - b\times a\times c - c\times b\times a - a\times c\times b + b\times c\times a +c\times a\times b=0\ \ \ \hbox{in}\ A^6(S^3)_{\ZZ}\ .\]
   \end{theorem}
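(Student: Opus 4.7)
The plan is to reduce the theorem to showing $A_0(M)=0$ (with rational coefficients) for the motive $M:=\wedge^3 h^2_{tr}(S)$, and then to apply Vial's theorem (Theorem \ref{ch}). By the Kahn--Murre--Pedrini decomposition (Theorem \ref{tr}), every Abel--Jacobi trivial $0$-cycle lies in $A^2(h^2_{tr}(S))$, so the triple product $a\times b\times c$ defines an element of $A^6(h^2_{tr}(S)^{\otimes 3})$. Applying the antisymmetrizer $\tfrac{1}{6}\sum_{\sigma\in\Sy_3}\operatorname{sgn}(\sigma)\sigma$ produces, up to a harmless rational factor, the cycle on the left-hand side of the theorem, now viewed as an element of $A^6(\wedge^3 h^2_{tr}(S))=A_0(M)$. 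Hence vanishing of $A_0(M)$ yields the theorem up to torsion.

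To invoke Theorem \ref{ch} with $n=1$, I need to check the Hodge-theoretic vanishing $H^{p,q}(M)=0$ for $q<1$ (i.e.\ $M$ has Hodge coniveau $\geq 1$), together with the condition that $M$ is motivated by an abelian variety of dimension $\leq 2$. The Hodge hypothesis is immediate from $p_g(S)=2$: the only Hodge piece of $H^\ast(M)$ with $q=0$ is $H^{6,0}(M)=\wedge^3 H^{2,0}_{tr}(S)=\wedge^3 \C^2=0$. For the motivatedness, note that the Albanese $A=\mathrm{Jac}(C_2)$ alone does not account for all of $h^2_{tr}(S)$, since $h^{2,0}(A)=1<2=p_g(S)$; instead I would exploit the construction of $S$ as a quotient of $C_4\times C_4$ by a group of order $6$. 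The \'etale triple cover $C_4\to C_2$ yields an isogeny decomposition $\mathrm{Jac}(C_4)\sim \mathrm{Jac}(C_2)\times P$, where $P:=\mathrm{Prym}(C_4/C_2)$ is an abelian surface, hence $h^1(C_4)\cong h^1(C_2)\oplus h^1(P)$ in $\MM_{\mathrm{rat}}$. Since $h(S)$ is a direct summand of $h(C_4\times C_4)=h(C_4)^{\otimes 2}$ (via the quotient morphism $q\colon C_4\times C_4\to S$ and the relation $q_\ast q^\ast=6\cdot\mathrm{id}$ on rational Chow groups), it follows that $M=\wedge^3 h^2_{tr}(S)$ is motivated by the disjoint union $\mathrm{Jac}(C_2)\sqcup P$ of abelian surfaces, and Vial's theorem gives $A_0(M)=0$.

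The main obstacle I anticipate is this motivatedness step: one must verify that $h^2_{tr}(S)$ is cleanly realised as a direct summand of a motive supported on the abelian surfaces $\mathrm{Jac}(C_2)\sqcup P$, despite the non-trivial fixed locus of the involution $\sigma$ on $C_4\times C_4$. I expect the finite-dimensionality of $h(S)$ together with the projector formalism underlying Theorem \ref{tr} to make this rigorous. Finally, to upgrade from $\QQ$- to $\ZZ$-coefficients, the antisymmetrized cycle is now known to be torsion in $A^6(S^3)_{\ZZ}$, and Roitman's theorem identifies this torsion with a subgroup of $\mathrm{Alb}(S^3)_{\mathrm{tors}}=(A^3)_{\mathrm{tors}}$. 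An elementary expansion shows the Albanese image of $\sum_\sigma \operatorname{sgn}(\sigma)\,a_{\sigma(1)}\times a_{\sigma(2)}\times a_{\sigma(3)}$ vanishes in each of the three factors --- each summand contains a marginal factor $\sum_j n_{i,j}=0$ (degree zero) or $\sum_j n_{i,j}\alpha(P_{i,j})=0$ (Albanese triviality) --- yielding the desired integral equality.
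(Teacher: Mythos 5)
Your reduction to showing $A_0(\wedge^3 h^2_{tr}(S))=0$ with $\QQ$-coefficients, and your Roitman/Albanese argument for the integral statement, are both correct and essentially agree with the paper. Your identification of the relevant abelian surface as $\mathrm{Prym}(C_4/C_2)$ is also the right picture (the paper calls it $B$ and gets it from Poincar\'e complete reducibility).

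The gap is in the final step, and it is not the one you anticipated. You are correct that $h^2_{tr}(S)$ is motivated by $\mathrm{Jac}(C_2)\sqcup P$, so $\wedge^3 h^2_{tr}(S)$ is as well; the fixed locus of $\sigma$ is not an obstacle. But Theorem \ref{ch} requires the motive to be \emph{motivated by an abelian variety of dimension at most two}, and $\mathrm{Jac}(C_2)\sqcup P$ does not qualify: tensor powers of $h(\mathrm{Jac}(C_2))\oplus h(P)$ contain mixed summands such as $h(\mathrm{Jac}(C_2))\otimes h(P)=h(\mathrm{Jac}(C_2)\times P)$, the motive of an abelian \emph{fourfold}. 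This is exactly what the paper flags in Remark \ref{notsure}: the strong form of the generalized Hodge conjecture underlying Theorem \ref{ch} is a result specific to self-products of abelian \emph{surfaces}, and the whole argument is engineered to keep the ambient abelian variety two-dimensional. Applying Vial's theorem to $\wedge^3 h^2_{tr}(S)$ directly therefore appeals to a hypothesis that is not verified (and that the author regards as out of reach).

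The paper circumvents this by splitting $h^2_{tr}(S)\cong h^2_{tr}(A)\oplus M_{tr}$ (using $\beta\colon S\to Y$ and finite-dimensionality to lift the splitting to $\MM_{\rm rat}$), and then proving two separate symmetry claims: $\wedge^2 A^2_{(2)}(A)=0$ by a direct computation of Voisin for abelian varieties (no Vial's theorem needed), and $\wedge^2 A^2(M_{tr})=0$ by applying Theorem \ref{ch} to $\wedge^2 M_{tr}$, which \emph{is} motivated by the single abelian surface $B=P$ and has $H^{4,0}=\wedge^2 H^{2,0}(M_{tr})=0$ because $\dim H^{2,0}(M_{tr})=1$. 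Plugging these into $\wedge^3(U\oplus V)=\bigoplus_j \wedge^j U\otimes\wedge^{3-j}V$ kills all four summands without ever invoking the generalized Hodge conjecture for an abelian fourfold. So the split you decided to avoid is precisely the manoeuvre that makes the argument legitimate; your version would need a separate justification that the strong GHC input extends to products of two non-isogenous abelian surfaces.
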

   
   \begin{proof} A first reduction step is that thanks to Roitman \cite{Ro}, one may replace $A^\ast()_{\ZZ}$ by Chow groups with $\QQ$-coefficients $A^\ast()$.
   
  Next, let us consider the decomposition of the Chow motive of $S$
   \[ h(S)=h^0(S)\oplus h^1(S)\oplus h^2_{tr}(S)\oplus h^2_{alg}(S)\oplus h^3(S)\oplus h^4(S)\ \ \ \hbox{in}\ \MM_{\rm rat}\ ,\]
   where $h^2_{tr}(S)$ is the {\em transcendental part\/} of the motive of $S$ (theorem \ref{tr}).
   
   The dominant morphism $\beta\colon S\to Y$ (proof of theorem \ref{pp}) identifies the motive of $Y$ with a submotive of the motive of $S$, in particular this gives (non-canonical) splittings
   \begin{equation}\label{spl} \begin{split}  h^2_{tr}(S)&=h^2_{tr}(Y)\oplus M_{tr}=h^2_{tr}(A)\oplus M_{tr}\ ,\\
                          h^2_{alg}(S)&=h^2_{alg}(Y)\oplus M_{alg}\ \ \ \ \ \ \hbox{in}\ \MM_{\rm hom}\ .\\   
                          \end{split}\end{equation}
     The surfaces $S$ and $Y$, being dominated by a product of curves, have finite-dimensional motive. This implies (using the nilpotence theorem \cite{Kim})
     that the splittings (\ref{spl}) also exist on the level of $\MM_{\rm rat}$.                   
                          
     We remark that the motive $M:=M_{tr}\oplus M_{alg}$ has
      \begin{equation}\label{hpq} \begin{split} \dim_\C H^{2,0}(M_{})&= \dim_\C H^{2,0}(S) - \dim_\C H^{2,0}(A)=2-1=1\ ,\\
                             \dim_\C H^{1,1}(M_{})&= \dim_\C H^{1,1}(S) - \dim_\C H^{1,1}(Y)=   7-5=2 .\\   
                             \end{split}\end{equation}
  
%
%
%

One has $A^\ast(h^2_{tr}(S))=A^2_{AJ}(S)$ and $A^\ast(h^2_{tr}(A))=A^2_{(2)}(A)=A^2_{AJ}(A)$ (here and below, for any abelian variety $A$, we write
$A^\ast_{(\ast)}(A)$ for the Fourier decomposition of \cite{Beau}, and $\pi^j_A$ for the Chow--K\"unneth projectors inducing the Fourier decomposition as in \cite{DM}).
This splitting of $h^2_{tr}(S)$ induces a splitting
  \[   A^2_{AJ}(S)  =A^2_{(2)}(A)\oplus  A^2(M_{tr})\ .\]
  
 We make two claims, that deal with the two pieces of this splitting separately:
 
 \begin{claim}\label{cl1} For any $a_1,a_2\in  A^2_{(2)}(A)$, there is equality
   \[ a_1\times a_2=a_2\times a_1\ \ \ \hbox{in}\ A^4(A\times A)\ .\]
   \end{claim}
   
 \begin{claim}\label{cl2} For any $v_1,v_2\in  A^2_{}(M_{tr})$, there is equality
   \[ v_1\times v_2=v_2\times v_1\ \ \ \hbox{in}\ A^4(S\times S)\ .\]
   \end{claim}

   Because of the equality
   \[ \bigwedge^3 \Bigl( A^2_{(2)}(A)\oplus  A^2(M_{tr})\Bigr) = \bigoplus_{j=0}^3 \bigwedge^j     A^2_{(2)}(A)\otimes \bigwedge^{3-j}  A^2(M_{tr})  \ ,\]
   these two claims together suffice to prove theorem \ref{main}.
   
   The first claim is easy, and directly follows from a more general result of Voisin's (this is \cite[Example 4.40]{Vo}):
        
   \begin{proposition}[Voisin \cite{Vo}] Let $A$ be an abelian variety of dimension $g$. Let $a_1,a_2\in A^g_{(g)}(A)$. Then
  \[a_1\times a_2=(-1)^g \, a_2\times a_1\ \ \ \hbox{in}\ A^{2g}(A\times A)\ .\]
  \end{proposition}
  
  In order to prove the second claim, we first need to understand the motive $M_{tr}$ a bit better.
  
  \begin{proposition}\label{M} There exist an abelian surface $B$, and a correspondence inducing a surjection
    \[  H^2(B\times B,\QQ)\ \twoheadrightarrow\ H^2(M_{tr},\QQ)\ .\]
    \end{proposition}
  
  \begin{proof} This follows from the specific geometry of the construction of $S$. Reverting to the notation of the proof of theorem \ref{pp}, the covering morphism $C_4\times C_4\to S$ induces 
 a surjection
   \[ H^2_{tr}(C_4\times C_4,\QQ)\ \twoheadrightarrow\ H^2_{tr}(S,\QQ)\ .\]
   An application of the K\"unneth formula gives a surjection
   \[ H^1(C_4,\QQ)\otimes H^1(C_4,\QQ)\ \twoheadrightarrow\ H^2_{tr}(C_4\times C_4,\QQ)\ .\]   
   The Abel--Jacobi map of the curve $C_4$ into the $4$-dimensional abelian variety $A_4:=\hbox{Jac}(C_4)$ induces  
 an isomorphism
   \[ H^1(A_4,\QQ)\otimes H^1(A_4,\QQ)\ \xrightarrow{\cong}\ H^1(C_4,\QQ)\otimes H^1(C_4,\QQ)\ .\]
  Choosing base points for the Abel--Jacobi maps in a compatible way, the triple covering of curves $C_4\to C_2$ induces a surjective homomorphism $A_4\to A:= \hbox{Jac}(C_2)$. Using Poincar\'e's complete reducibility theorem, this implies that $A_4$ is isogenous to $B\times A$, where $B$ is an abelian surface. This gives a decomposition
     \[ H^1(A_4,\QQ)= H^1(A\times B,\QQ)=H^1(A,\QQ)\oplus H^1(B,\QQ)\ .\]
     Combining all these maps, we obtain a surjection
     \begin{equation}\label{incl} 
    \begin{split} \bigl( H^1(A,\QQ)\oplus H^1(B,\QQ)\bigr)^{\otimes 2}\ \xrightarrow{\cong}\ H^1(C_4,\QQ)\otimes H^1(C_4,\QQ) \twoheadrightarrow\ H^2_{tr}(C_4\times C_4,\QQ)&\\\twoheadrightarrow H^2_{tr}(S,\QQ)
    \xrightarrow{\cong}
       H^2_{tr}(A,\QQ)\oplus H^2&(M_{tr},\QQ)\ . \\
    \end{split}  
          \end{equation}
  It follows from the truth of the standard conjectures for surfaces and abelian varieties that all arrows in (\ref{incl}) are induced by correspondences. Let us now consider the summand
  $H^1(A,\QQ)\otimes H^1(A,\QQ)$ of the left-hand side of (\ref{incl}).
  The triple covering $C_4\to C_2$ induces a commutative diagram
   \[ \begin{array}[c]{ccccc}
      H^1(C_4,\QQ)\otimes H^1(C_4,\QQ) &\to& H^2_{tr}(\hbox{Sym}^2 C_4^{},\QQ)&\to& H^2_{tr}(S,\QQ)\\
      \downarrow&&\downarrow&&\downarrow{\scriptstyle \alpha_\ast}\\
        H^1(C_2,\QQ)\otimes H^1(C_2,\QQ) &\to& H^2_{tr}(\hbox{Sym}^2 C_2^{},\QQ)&\xrightarrow{\pi_\ast}& H^2_{tr}(A,\QQ)\ ,\\   
        \end{array}\]
        where the composition of upper horizontal arrows is the same map $H^1(C_4,\QQ)\otimes H^1(C_4,\QQ)\to H^2_{tr}(S,\QQ)$ as in (\ref{incl}), and $\alpha$ and $\pi$ are as in the proof of theorem \ref{pp}. Because the summand $H^1(A,\QQ)\otimes H^1(A,\QQ)$ of
        $H^1(C_4,\QQ)\otimes H^1(C_4,\QQ)$ maps isomorphically to $H^1(C_2,\QQ)\otimes H^1(C_2,\QQ)$, it follows that this summand maps onto $H^2_{tr}(A,\QQ)$ in (\ref{incl}).
        More precisely, the map 
          \[ H^1(A,\QQ)\otimes H^1(A,\QQ)\ \to\  H^2_{tr}(A,\QQ)\oplus H^2(M_{tr},\QQ)\] 
     deduced from diagram (\ref{incl}) induces a surjection onto $H^2_{tr}(A,\QQ)$ and the zero-map to $H^2(M_{tr},\QQ)$, under both projections.
                  

Let us now analyze the other summands of the left-hand side of (\ref{incl}).
 There is an induced action of $\xi\in\aut(C_4)$ on $A_4$, and an eigenspace decomposition
    \[ H^1(A_4,\C)= H^1(A_4,\C)^{(1)}\oplus H^1(A_4,\C)^{(\nu)}\oplus H^1(A_4,\C)^{(\nu^2)}\ \]
    (where $\nu$ is a primitive third root of unity). The first eigenspace (which is $2$-dimensional) corresponds to $H^1(A,\C)\cong H^1(C_2,\C)$, while the sum of the two other ($1$-dimensional) summands corresponds to $H^1(B,\C)$. 
    The covering morphism $C_4\times C_4\to S$ factors as
     \[ C_4\times C_4\ \to\ (C_4\times C_4)/\langle \xi_{xy}\rangle\ \to\ S\ \]
     (where $\xi_{xy}\in\aut(C_4\times C_4)$ is the order $3$ automorphism acting diagonally as in the proof of theorem \ref{pp}),
     and so there is a factorization
     \[    H^2_{}(C_4\times C_4,\C)\ \to\ H^2_{}((C_4\times C_4)/\langle \xi_{xy}\rangle,\C)\ \to\ H^2_{}(S,\C) \ .\]
     It follows that the summands of type $H^1(A_4,\C)^{(1)}\otimes H^1(A_4,\C)^{(\nu)}$ and $H^1(A_4,\C)^{(1)}\otimes H^1(A_4,\C)^{(\nu^2)}$ (and their permutations) map to zero under the natural map. In other words, the natural map 
     \[ H^1(A_4,\C)\otimes H^1(A_4,\C)\ \to\ H^2(S,\C) \]
     is the same as the composition
     \[   \begin{split}  &H^1(A_4,\C)^{(1)}\otimes H^1(A_4,\C)^{(1)}\\
     &\oplus \Bigl(  H^1(A_4,\C)^{(\nu)}\otimes H^1(A_4,\C)^{(\nu^2)}\oplus  H^1(A_4,\C)^{(\nu^2)}\otimes H^1(A_4,\C)^{(\nu)}  \Bigr)      \ \to\ H^2(S,\C)\ . \end{split}\]     
     The first summand corresponds to $H^1(A,\C)\otimes H^1(A,\C)$, the second is contained in $H^1(B,\C)\otimes H^1(B,\C)$. Thus, we see that ``mixed terms'' $H^1(A,\QQ)\otimes H^1(B,\QQ)$ 
     and $H^1(B,\QQ)\otimes H^1(A,\QQ)$ in (\ref{incl}) map to zero. It follows that the summand $H^1(B,\QQ)\otimes H^1(B,\QQ)$ in (\ref{incl}) maps onto $H^2(M_{tr},\QQ)$.
        \end{proof}

 Let us now prove claim \ref{cl2} (and hence theorem \ref{main}). Proposition \ref{M}, in combination with the fact that the standard conjectures hold for surfaces and abelian varieties, shows that there is a map
  \[ M_{tr}\ \to\ h^2(B\times B)\ \ \ \hbox{in}\ \MM_{\rm hom} \]
  admitting a left-inverse. Using Kimura finite-dimensionality (cf. for instance \cite[Section 3.3]{V3}), the same holds in $\MM_{\rm rat}$, i.e. the motive $M_{tr}$ is {\em motivated by the abelian surface $B$\/}. The motive $M:=\wedge^2 M_{tr}$ (being a submotive of $M_{tr}^{\otimes 2}$) is also motivated by $B$. The motive $M$ has $H^j(M)=0$ for all $j\not=4$ and $H^{4,0}(M)=\wedge^2 H^{2,0}(M_{tr})=0$, since $\dim H^{2,0}(M_{tr})=1$ (cf. (\ref{hpq})). Applying theorem \ref{ch} to $M$ (with $n=1$), we find that
   \[  \wedge^2 A_0(M_{tr})=  A_0(M)=0\ , \]
   proving claim \ref{cl2}.
  \end{proof}

  \begin{corollary}\label{ghc} Let $S$ be a Cancian--Frapporti surface, and let $m>2$. Then the sub-Hodge structure
   \[ \wedge^m H^2(S,\QQ)\ \subset\ H^{2m}(S^m,\QQ) \]
   is supported on a divisor.
   \end{corollary}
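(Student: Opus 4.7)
The plan is to exploit the refined motivic decomposition $h^2_{tr}(S) = h^2_{tr}(A) \oplus M_{tr}$ established in the proof of theorem \ref{main}, together with Abdulali's strong generalized Hodge conjecture for self-products of abelian surfaces \cite{Ab}.

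First I would decompose the Hodge structure. Using $H^2(S,\QQ) = NS(S)_{\QQ} \oplus H^2_{tr}(S,\QQ)$ together with the splitting $H^2_{tr}(S,\QQ) \cong H^2_{tr}(A,\QQ) \oplus H^2(M_{tr},\QQ)$ coming from (\ref{spl}) and proposition \ref{M}, and applying the identity $\wedge^m(V\oplus W) = \bigoplus_{i+j=m}\wedge^i V \otimes \wedge^j W$ twice, one writes
\[
\wedge^m H^2(S,\QQ) = \bigoplus_{a+b+c=m} \wedge^a NS(S)_{\QQ} \otimes \wedge^b H^2_{tr}(A,\QQ) \otimes \wedge^c H^2(M_{tr},\QQ),
\]
and it suffices to show that each summand is supported on a divisor in $S^m$.

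Summands with $a\ge 1$ are immediate: $\wedge^a NS(S)_{\QQ}$ is pure Tate and supported on a subvariety of $S^a$ of codimension $a\ge 1$, so the tensor product is supported on a divisor in $S^m$. For summands with $a=0$, the hypothesis $m>2$ combined with $b+c=m$ forces $\max(b,c)\ge 2$. Since $\dim H^{2,0}(A)=\dim H^{2,0}(M_{tr})=1$ by (\ref{hpq}), one has $h^{2b,0}(\wedge^b H^2_{tr}(A,\QQ))=\binom{1}{b}=0$ whenever $b\ge 2$, and symmetrically for $\wedge^c H^2(M_{tr},\QQ)$ when $c\ge 2$. Hence in every relevant summand at least one transcendental wedge factor has Hodge coniveau $\ge 1$.

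The key step is then to invoke Abdulali's theorem: every sub-Hodge structure of $H^\ast(T^n,\QQ)$, for $T$ an abelian surface and any $n$, of Hodge coniveau $\ge 1$ is supported on a divisor. For $b\ge 2$ this applies to $\wedge^b H^2_{tr}(A,\QQ) \subset H^{2b}(A^b,\QQ)$; for $c\ge 2$, it applies to $\wedge^c H^2(M_{tr},\QQ)$, regarded as a sub-Hodge structure of $H^{2c}(B^{2c},\QQ)$ via the inclusion $M_{tr} \hookrightarrow h^2(B \times B)$ extracted from proposition \ref{M}.

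The main obstacle will be transferring divisor support from $A^b$ (resp.\ $B^{2c}$) back to $S^m$. I would handle this using the algebraic correspondences that realize the motivic inclusions $h^2_{tr}(A)\hookrightarrow h^2_{tr}(S)$ and $M_{tr}\hookrightarrow h^2(B\times B)$; these exist and are algebraic thanks to the standard conjectures for surfaces and abelian surfaces together with Kimura finite-dimensionality, just as invoked in the proof of theorem \ref{main}. Since the image of a sub-Hodge structure supported on a divisor under an algebraic cohomological correspondence is again supported on a divisor (namely on the image of the supporting divisor under the correspondence), each summand acquires divisor support in $S^m$. Assembling the summands yields the corollary.
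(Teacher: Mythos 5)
The paper's own proof of this corollary is a one-liner: it cites Voisin \cite[Corollary 3.5.1]{V9}, which (via the Bloch--Srinivas decomposition-of-the-diagonal argument) shows that the truth of Conjecture \ref{conj} for a surface $S$ already implies divisor-support for $\wedge^m H^2(S,\QQ)$. So in the paper, Corollary \ref{ghc} is a formal consequence of Theorem \ref{main}, with no new geometry needed.

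Your proposal takes a genuinely different route: instead of deducing the GHC statement from the Chow-theoretic Theorem \ref{main}, you re-run the motivic decomposition
$H^2(S,\QQ) = NS(S)_{\QQ}\oplus H^2_{tr}(A,\QQ)\oplus H^2(M_{tr},\QQ)$
directly at the level of Hodge structures, expand $\wedge^m$ accordingly, dispose of the $NS$-summands by an elementary divisor-support argument, observe via (\ref{hpq}) that the pigeonhole $\max(b,c)\ge 2$ forces coniveau $\ge 1$ on one of the transcendental wedge factors, and then invoke Abdulali's strong GHC for self-products of abelian surfaces. This is a valid argument, and it is interesting in that it makes no use of Theorem \ref{main} (nor of Roitman, nor of Vial's Theorem \ref{ch}); it only needs the \emph{cohomological} content of the motivic decomposition plus Abdulali. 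In effect you are giving an independent, purely Hodge-theoretic proof of the corollary, whereas the paper treats it as a downstream consequence of the main theorem.

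Two minor points you should tighten. First, the transfer step ``the image of a sub-Hodge structure supported on a divisor under an algebraic cohomological correspondence is again supported on a divisor'' is not true in blanket generality; it holds because the correspondences in play are degree-$0$ morphisms of motives, hence represented by cycles of dimension $\dim(S^m)$ in the relevant products, so that the restriction to (divisor)$\times S^m$ has dimension $\dim(S^m)-1$ and its projection to $S^m$ lands in a divisor. Second, for the $M_{tr}$-factor you need the \emph{left inverse} $h^2(B\times B)\twoheadrightarrow M_{tr}\hookrightarrow h^2(S)$ (available by Kimura finite-dimensionality, as you note) rather than the inclusion $M_{tr}\hookrightarrow h^2(B\times B)$ that you name, since the pushforward has to go from $B^{2c}$ to $S^m$. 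Neither issue is a real gap, but both deserve a sentence. In the end, the paper's one-line reduction to Voisin is much shorter; your approach has the compensating merit of not depending on Theorem \ref{main} and of making the role of Abdulali's theorem fully explicit.
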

   
   \begin{proof} As Voisin had already remarked \cite[Corollary 3.5.1]{V9}, this is implied by the truth of conjecture \ref{conj} for $S$ (as can be seen using the Bloch--Srinivas argument \cite{BS}).
   \end{proof}

%

  \begin{remark}\label{notsure} The strong form of the generalized Hodge conjecture (as mentioned in the proof of theorem \ref{ch}) is a result specific to self-products of abelian {\em surfaces\/}, and seems out of reach for self-products of higher-dimensional abelian varieties. As such, the argument employed here crucially hinges on the fact that the Cancian--Frapporti surfaces $S$ are constructed starting from a Galois cover $C_m\to C_n$, where $C_m, C_n$ are curves of genus $m$ resp. $n$ and $m-n\le 2$.
 While the other surfaces with $p_g=q=2$ constructed in \cite{CF} still have surjective Albanese map \cite[Theorem 4]{P}, for all but one of them the difference $m-n$ is larger than $2$. As such, they do not enter in the set-up of the present note; some new argument is needed to prove conjecture \ref{conj} for them.
   \end{remark}
  
  \begin{remark} My initial hope was to establish that Cancian--Frapporti surfaces have a {\em multiplicative Chow--K\"unneth decomposition\/} (in the sense of \cite{SV}), and satisfy the condition $(\ast)$ of \cite{FV}. This proved to be unfeasibly difficult, however. 
  
(The problem was that I could not prove that the class of the curve $C_4$ in $A^3(A_4)$ is symmetrically distinguished. This cannot possibly be true for a {\em general\/} genus $4$ curve, but might perhaps be true for $C_4$ because it is a triple cover over $C_2$ ?)
  \end{remark}

\vskip1cm
\begin{nonumberingt} I am grateful to a referee who kindly suggested substantial simplifications of the main argument. Thanks to
Kai and Len, my dedicated coworkers at the Alsace Center for Advanced Lego-Building and Mathematics.
\end{nonumberingt}

\vskip1cm

\end{document}